\newcommand{\rk}{\textrm{rk}}
\newtheorem{theorem}{Theorem}[section]%������������������
\newtheorem{proposition}[theorem]{Proposition}
\newtheorem{lemma}[theorem]{Lemma}
\newtheorem{example}[theorem]{Example}
\begin{document}
\title{\textbf{Some Remarks on Systems of Equiangular Lines}}
\author[a]{Mengyue Cao}
\author[b,c,]{Jack H. Koolen\footnote{Corresponding author.}}
\author[d]{Jae Young Yang}
\affil[a]{\footnotesize{School of Mathematical Sciences, Beijing Normal University, 19 Xinjiekouwai Street, Beijing, 100875, PR China.}}
\affil[b]{\footnotesize{School of Mathematical Sciences, University of Science and Technology of China, 96 Jinzhai Road, Hefei, 230026, Anhui, PR China.}}
\affil[c]{\footnotesize{Wen-Tsun Wu Key Laboratory of CAS, 96 Jinzhai Road, Hefei, 230026, Anhui, PR China}}
\affil[d]{\footnotesize{School of Mathematical Sciences, Anhui University, 111 Jiulong Road, Hedei, 230039, Anhui, PR China.}}
\date{}
\maketitle
\newcommand\blfootnote[1]{%
\begingroup
\renewcommand\thefootnote{}\footnote{#1}%
\addtocounter{footnote}{-1}%
\endgroup}
\blfootnote{2010 Mathematics Subject Classification. Primary 05C50, secondary 05C22.}
\blfootnote{E-mail addresses: cmy1325@163.com (M.Y. Cao), koolen@ustc.edu.cn (J.H. Koolen), piez@naver.com (J.Y. Yang).}

\renewcommand\abstractname{Abstract}
\begin{abstract} In this note, we study the maximum number $N_\alpha(d)$ of a system of equiangular lines in $\mathbb{R}^d$ with cosine $\alpha$, where
$\frac{1}{\alpha}$ is not an odd positive integer. This note is motivated by a remark in a $2018$ paper by Balla, Dr\"{a}xler, Keevash and Sudakov.\\
\emph{key words}: Equiangular lines, Seidel matrix, smallest eigenvalue, spectral radius, rank. \end{abstract}

\section{Introduction}

A system of lines through the origin in $d$-dimensional Euclidean space $\mathbb{R}^d$ is called equiangular if the angle between any pair of lines is
the same. Let $N_{\alpha}(d)$ be the maximum number of a system of equiangular lines in $\mathbb{R}^d$ with common angle $\arccos \alpha$. For more information  on systems of equiangular lines, see \cite{GKMS}. In 2018, Balla, Dr\"{a}xler, Keevash and Sudakov \cite{BDKS} conjectured  that if
$\alpha = \frac{1}{2r-1}$ for a positive integer $r \geq 2$, then $N_{\frac{1}{2r-1}} (d) =\frac{r(d-1)}{r-1} + O(1)$, for sufficiently large $d$. They also wrote `If $\alpha$ is not of the above form, the situation is less clear but it is conceivable that $N_{\alpha}(d) = (1 + o(1))d$.'.

In this note, we will consider $N_{\alpha}(d)$ when $\frac{1}{\alpha}$ is not an odd integer. We prove the following:

\begin{theorem}\label{N}
Let $0<\tau\leq\frac{1}{1+2\sqrt{2+\sqrt{5}}}$. Then there exists a sequence $(\alpha_i)^{\infty}_{i=1}$ such that
\begin{enumerate}[(i)]
\item $\underset{i\rightarrow\infty}{\lim}{\alpha_i}=\tau$,

\item for all $i$, there exists $\eta_i>0$ such that $N_{\alpha_i}(d)\geq(1+\eta_i)d$.
\end{enumerate}
\end{theorem}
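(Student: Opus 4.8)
The plan is to work entirely with Seidel matrices. An $n$-line equiangular system in $\mathbb{R}^d$ with cosine $\alpha$ exists if and only if there is a Seidel matrix $S$ of order $n$ (symmetric, zero diagonal, off-diagonal entries $\pm1$) with $I+\alpha S$ positive semidefinite of rank at most $d$; since $\mathrm{rank}(I+\alpha S)=n-\mathrm{mult}_S(-1/\alpha)$ and $I+\alpha S\succeq 0\iff\lambda_{\min}(S)\ge -1/\alpha$, it suffices to produce, for $n$ as large as possible relative to $d$, a Seidel matrix whose least eigenvalue is exactly $-1/\alpha$ with large multiplicity. I would manufacture such matrices from disjoint copies of a small graph: fix a connected graph $G$ on $m$ vertices with adjacency matrix $A$ and spectral radius $\beta=\lambda_1(G)$, let $q\ge 2$, let $\Gamma$ be the disjoint union of $q$ copies of $G$, and set $S_\Gamma:=J-I-2A(\Gamma)$ and $\alpha:=\tfrac{1}{1+2\beta}$ (so $-1/\alpha=-1-2\beta$).

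\textbf{The eigenvalue computation.} Writing $S_\Gamma=J_q\otimes J_m-I_q\otimes I_m-2\,I_q\otimes A$ and splitting $\mathbb{R}^{qm}=(\mathbf{1}_q\otimes\mathbb{R}^m)\oplus(\mathbf{1}_q^{\perp}\otimes\mathbb{R}^m)$, the matrix $S_\Gamma$ becomes block diagonal: on each of the $q-1$ blocks of the form $\mathbf{1}_q^{\perp}\otimes\mathbb{R}^m$ it acts as $-I_m-2A$, whose eigenvalues are $-1-2\theta$ over the eigenvalues $\theta$ of $A$, the smallest being the simple value $-1-2\beta$; on the remaining block it acts as $qJ_m-I_m-2A$, and since $qJ_m-I_m-2A+(1+2\beta)I_m=qJ_m+2(\beta I_m-A)$ is a sum of two positive semidefinite matrices, that block has least eigenvalue $\ge -1-2\beta$, with strict inequality because the value $-1-2\beta$ would force a nonzero vector in $\ker J_m\cap\ker(\beta I_m-A)$, impossible as the Perron eigenvector of a connected graph is strictly positive. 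Hence $\lambda_{\min}(S_\Gamma)=-1-2\beta=-1/\alpha$ with multiplicity exactly $q-1$, so the associated Gram matrix realizes $n=qm$ equiangular lines of cosine $\alpha$ in $d=qm-(q-1)=q(m-1)+1$ dimensions. Thus $N_\alpha\bigl(q(m-1)+1\bigr)\ge qm$, and, since $N_\alpha(\cdot)$ is nondecreasing, letting $q$ range over the integers yields $N_\alpha(d)\ge\tfrac{m}{m-1}(d-m)$, hence $N_\alpha(d)\ge(1+\eta)d$ for every fixed $\eta<\tfrac{1}{m-1}$ and all sufficiently large $d$.

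\textbf{Choosing the graphs.} Given $0<\tau\le\tfrac{1}{1+2\sqrt{2+\sqrt5}}$, put $\lambda^{\ast}:=\tfrac12(\tfrac1\tau-1)$, so $\lambda^{\ast}\ge\sqrt{2+\sqrt5}$. By Shearer's theorem on the distribution of the largest eigenvalue of a graph, the adjacency spectral radii of connected graphs are dense in $[\sqrt{2+\sqrt5},\infty)$, so there exist connected graphs $G_i$ on $m_i$ vertices with $\beta_i:=\lambda_1(G_i)\to\lambda^{\ast}$; one may in addition arrange $\beta_i\notin\mathbb{Z}$, so that $1/\alpha_i$ is not an odd integer. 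Set $\alpha_i:=\tfrac{1}{1+2\beta_i}$ and $\eta_i:=\tfrac{1}{2(m_i-1)}>0$. Then $\alpha_i\to\tfrac{1}{1+2\lambda^{\ast}}=\tau$, which is (i), and by the previous paragraph $N_{\alpha_i}(d)\ge(1+\eta_i)d$ for all sufficiently large $d$, which is (ii).

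\textbf{The main point.} The only external ingredient is Shearer's density theorem, and it is exactly this theorem that pins the threshold to $\lambda^{\ast}\ge\sqrt{2+\sqrt5}$, i.e. $\tau\le\tfrac{1}{1+2\sqrt{2+\sqrt5}}$; everything else is the block-diagonalization above. The steps I expect to need some care are the exact determination of the multiplicity of $-1/\alpha$ (so that $d=q(m-1)+1$ is known rather than merely bounded) and the interpolation from the arithmetic progression of dimensions $q(m-1)+1$ to all large $d$ via monotonicity of $N_\alpha$; neither is hard, but both must be done to make the constant $\eta_i$ explicit.
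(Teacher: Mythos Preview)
Your proof is correct and follows essentially the same route as the paper: take disjoint unions of copies of a fixed graph to force the Seidel matrix to have smallest eigenvalue $-1-2\rho$ with multiplicity at least $q-1$, then invoke Shearer's density theorem to hit any target $\lambda^{\ast}\ge\sqrt{2+\sqrt5}$. Your treatment is somewhat more detailed than the paper's---you compute the exact multiplicity via the Perron eigenvector argument, and you make the interpolation from the arithmetic progression of dimensions to all large $d$ explicit---but these are refinements of the same argument, not a different strategy.
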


\section{Seidel matrices}
First, we introduce Seidel matrix which is our main tool. All graphs are simple and undirected. For undefined terminologies, we refer to \cite{GR, BH}.

Let $G$ be a graph with $n$ vertices. The adjacency matrix $A(G)$ of $G$ is an $n \times n$ matrix whose rows and columns are indexed by the vertices of $G$ such that $A(G)_{xy}$ is $1$ if $x$ and $y$ are adjacent vertices and $0$ otherwise. The Seidel matrix $S(G)$ of $G$ is the matrix $S(G) = \mathbf{J-I}-A(G)$, where $\mathbf{J}$ is the all-ones matrix and $\mathbf{I}$ is the identity matrix. Let $\mathbf{j}$ denote
the all-ones vector. The spectral radius $\rho(G)$ of a graph $G$ is the largest eigenvalue of $A(G)$.

Seidel matrices and systems of equiangular lines, are related as follow (see for example, \cite[Section 11.1]{GR}):

\begin{proposition}\label{eq}

There exists a system of $n$ equiangular lines in $\mathbb{R}^d$ with common angle $\arccos \alpha$ if and only if there exists a graph $G$ with $n$ vertices such that $S(G)$ has smallest eigenvalue at least $-\frac{1}{\alpha}$ and rk$(S(G) + \frac{1}{\alpha}\mathbf{I}) = d$.
\end{proposition}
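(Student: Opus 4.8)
The plan is to route everything through Gram matrices. Given $n$ lines, choose a unit vector $v_i$ on the $i$-th line; since every pair of lines makes the angle $\arccos\alpha$, the Gram matrix $M=\bigl(\langle v_i,v_j\rangle\bigr)_{i,j}$ has all diagonal entries $1$ and all off-diagonal entries equal to $\pm\alpha$. Hence $M=\mathbf{I}+\alpha S$ where $S$ is a symmetric $n\times n$ matrix with zero diagonal and $\pm 1$ off-diagonal entries, and such an $S$ is exactly the Seidel matrix $S(G)$ of the graph $G$ on $\{1,\dots,n\}$ in which $i\sim j$ iff $\langle v_i,v_j\rangle=-\alpha$ (so that $S_{ij}=+1$ when $i\not\sim j$ and $S_{ij}=-1$ when $i\sim j$, matching $S(G)=\mathbf{J}-\mathbf{I}-A(G)$). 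Conversely any graph $G$ determines, via $M:=\mathbf{I}+\alpha S(G)$, a candidate Gram matrix. So the statement reduces to: $M$ is positive semidefinite of rank $d$ $\iff$ $S(G)$ has smallest eigenvalue $\ge-\tfrac1\alpha$ and $\operatorname{rk}\bigl(S(G)+\tfrac1\alpha\mathbf{I}\bigr)=d$.

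The forward direction then runs as follows. Suppose we have $n$ equiangular lines with angle $\arccos\alpha$ spanning $\mathbb{R}^d$ (after passing to the span of the lines we may assume this; in general a system in $\mathbb{R}^d$ spans some $\mathbb{R}^{d'}$ with $d'\le d$ and the construction below yields $\operatorname{rk}(S(G)+\tfrac1\alpha\mathbf{I})=d'$). Form $M$ and $G$ as above. Because $M$ is a Gram matrix it is positive semidefinite, so every eigenvalue of $M=\mathbf{I}+\alpha S(G)$ is $\ge 0$; since these eigenvalues are $1+\alpha\lambda$ as $\lambda$ runs over the eigenvalues of $S(G)$ and $\alpha>0$, this says $\lambda\ge-\tfrac1\alpha$, i.e. the smallest eigenvalue of $S(G)$ is at least $-\tfrac1\alpha$. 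Finally $\operatorname{rk}\bigl(S(G)+\tfrac1\alpha\mathbf{I}\bigr)=\operatorname{rk}(M)$ (as $\alpha\ne0$) $=\dim\operatorname{span}(v_1,\dots,v_n)=d$.

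For the converse, start from a graph $G$ on $n$ vertices with smallest Seidel eigenvalue $\ge-\tfrac1\alpha$ and $\operatorname{rk}\bigl(S(G)+\tfrac1\alpha\mathbf{I}\bigr)=d$, and put $M:=\mathbf{I}+\alpha S(G)=\alpha\bigl(S(G)+\tfrac1\alpha\mathbf{I}\bigr)$. The eigenvalues of $M$ are $1+\alpha\lambda\ge0$, so $M$ is positive semidefinite, and it has rank $d$; hence $M=U^{\top}U$ for some real $d\times n$ matrix $U$ of rank $d$ (take $U$ to have rows $\sqrt{\mu_k}\,w_k^{\top}$ from a spectral decomposition $M=\sum_{k=1}^d\mu_k w_k w_k^{\top}$ with $\mu_k>0$). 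Taking $v_1,\dots,v_n\in\mathbb{R}^d$ to be the columns of $U$, we get $\langle v_i,v_i\rangle=M_{ii}=1$ (the diagonal of $S(G)$ vanishes) and $\langle v_i,v_j\rangle=\alpha\,S(G)_{ij}=\pm\alpha$ for $i\ne j$, and the $v_i$ span $\mathbb{R}^d$. The lines $\mathbb{R}v_1,\dots,\mathbb{R}v_n$ are then pairwise at angle $\arccos\alpha$; they are pairwise distinct because $\mathbb{R}v_i=\mathbb{R}v_j$ would give $v_j=\pm v_i$ and hence $\langle v_i,v_j\rangle=\pm1$, impossible since $\alpha\ne1$ (which holds because $\tfrac1\alpha$ is not an odd integer). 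This produces $n$ equiangular lines in $\mathbb{R}^d$ with common angle $\arccos\alpha$.

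The argument is standard and essentially bookkeeping; the points that need a little care are: recognizing that symmetric hollow $\pm1$ matrices are precisely Seidel matrices under the convention $S(G)=\mathbf{J}-\mathbf{I}-A(G)$ (getting the adjacency/non-adjacency signs the right way round); the equivalence between ``$M\succeq0$'' and ``smallest eigenvalue of $S(G)$ is $\ge-\tfrac1\alpha$'', which is exactly where $\alpha>0$ enters; matching the rank of $S(G)+\tfrac1\alpha\mathbf{I}$ to the ambient dimension, which forces one to work with systems spanning $\mathbb{R}^d$ in the forward direction and to use the factorization $M=U^{\top}U$ with $U$ of size $d\times n$ in the converse; and the final check that the lines obtained are genuinely distinct.
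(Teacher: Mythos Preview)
The paper does not prove this proposition; it merely cites \cite[Section~11.1]{GR}. Your argument is precisely the standard Gram-matrix proof one finds there and is correct, including your handling of the subtlety that ``lines in $\mathbb{R}^d$'' must be read as ``lines spanning $\mathbb{R}^d$'' for the rank to equal $d$ exactly. Two cosmetic points: the identity $S(G)=\mathbf{J}-\mathbf{I}-A(G)$ you quote from the paper appears to be a typo for $\mathbf{J}-\mathbf{I}-2A(G)$ (with the stated $(0,1)$-adjacency matrix the former would have $0/1$ off-diagonal entries, not $\pm1$); your description of the entries $S_{ij}=\pm1$ is the correct one and is what the rest of your argument actually uses. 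Also, the reason $\alpha\ne1$ in your distinctness check is simply that distinct lines force $0<\alpha<1$; the hypothesis that $1/\alpha$ is not an odd integer is not part of this proposition.
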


This leads to the following definition. The number $R_{\beta}(n)$ is defined as
$R_\beta(n):=\min\{\rk(S(G)+\beta\mathbf{I})\mid G$ is a graph on $n$ vertices and $S(G)$ has smallest eigenvalue at least $-\beta\}$. We obtain the following lemma immediately from Proposition \ref{eq}.

\begin{lemma}\label{Ne}
$N_{\alpha}(d) \geq n$ if and only if $R_{\frac{1}{\alpha}}(n) \leq d.$
\end{lemma}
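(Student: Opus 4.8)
The plan is to unwind the definitions on both sides of the stated equivalence and then invoke Proposition \ref{eq} as the single nontrivial ingredient. First I would note that $N_{\alpha}(d)\geq n$ holds if and only if there exists a system of \emph{exactly} $n$ equiangular lines in $\mathbb{R}^d$ with common angle $\arccos\alpha$: if $N_\alpha(d)\geq n$ then some system with at least $n$ lines exists, and any $n$ of those lines form again a system of equiangular lines in $\mathbb{R}^d$ with the same angle; conversely, the existence of a system of $n$ such lines forces $N_\alpha(d)\geq n$ by the definition of $N_\alpha(d)$ as a maximum.

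Next I would reconcile ``lying in $\mathbb{R}^d$'' with ``having rank exactly $d$''. A system of $n$ equiangular lines in $\mathbb{R}^d$ spans a subspace of some dimension $d'\leq d$, and conversely any system spanning $\mathbb{R}^{d'}$ with $d'\leq d$ sits inside $\mathbb{R}^d$. Thus the existence of a system of $n$ equiangular lines in $\mathbb{R}^d$ is equivalent to the existence, for some integer $d'\leq d$, of a system of $n$ equiangular lines spanning $\mathbb{R}^{d'}$. Applying Proposition \ref{eq} with $d'$ in the role of $d$ turns this into: there is an integer $d'\leq d$ and a graph $G$ on $n$ vertices such that $S(G)$ has smallest eigenvalue at least $-\frac{1}{\alpha}$ and $\rk\bigl(S(G)+\frac{1}{\alpha}\mathbf{I}\bigr)=d'$. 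This last condition is clearly the same as saying that there is a graph $G$ on $n$ vertices with $S(G)$ of smallest eigenvalue at least $-\frac{1}{\alpha}$ and $\rk\bigl(S(G)+\frac{1}{\alpha}\mathbf{I}\bigr)\leq d$, which by the definition of $R_{\frac{1}{\alpha}}(n)$ as a minimum is exactly $R_{\frac{1}{\alpha}}(n)\leq d$. Chaining these equivalences yields the lemma.

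There is no genuine obstacle here; the only points that deserve a sentence of care are the passage between a system and one of its $n$-element subsystems, and the observation that ``in $\mathbb{R}^d$'' only constrains $\rk\bigl(S(G)+\frac{1}{\alpha}\mathbf{I}\bigr)\leq d$ whereas Proposition \ref{eq} is phrased with equality. It is also worth remarking in passing that the family over which $R_{\frac{1}{\alpha}}(n)$ is minimized is nonempty---for example the empty graph on $n$ vertices has Seidel matrix $\mathbf{J}-\mathbf{I}$ with smallest eigenvalue $-1\geq-\frac{1}{\alpha}$, since $0<\alpha\leq 1$---so that $R_{\frac{1}{\alpha}}(n)$ is well defined and the minimum is attained.
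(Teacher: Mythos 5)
Your argument is correct and is exactly the intended one: the paper offers no proof at all, simply asserting that the lemma follows immediately from Proposition \ref{eq}, and your write-up is the careful unwinding of that claim (including the genuinely worthwhile observation that ``in $\mathbb{R}^d$'' corresponds to $\rk\bigl(S(G)+\frac{1}{\alpha}\mathbf{I}\bigr)\leq d$ while the proposition is stated with equality). Nothing further is needed.
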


\begin{example}
 $N_{\frac{1}{3}}(d)=28$  for $7\leq d\leq13$ and $R_3(n)=7$ for $17\leq n\leq28$ (see \cite[Table $1$]{GKMS}).
\end{example}
Now we discuss some properties of $R_{\beta}(n)$. Since $S(G)$ is a real symmetric matrix, it is diagonalizable and all its eigenvalues are totally real algebraic integers. This implies that if
$\beta>1$ is not a totally real algebraic integer, then $R_\beta(n)=n$ and $N_{\frac{1}{\beta}}(d)=d$ for all integers $n\geq2$ and $d\geq2$.

Haemers observed that for a graph $G$ of order $n$, we have $\det(-S(G)+x\mathbf{I})\equiv\det(x\mathbf{I}-\mathbf{J}+\mathbf{I})\ (\rm{mod}\ 2)$ (see \cite{G}). This shows that for example $R_{\sqrt{2}}(n)=n$ and $N_{\frac{1}{\sqrt{2}}}(d)=d$ for all integers $d\geq2$ and $n\geq2$. Similar but more complicated formulas for the coefficients of
$\det(-S(G)+x\mathbf{I})$ are known (see for example, \cite{G,GKMS,GY}). The observation of Haemers also implies that $2t+1\geq R_{2m}(2t+1)\geq2t=R_{2m}(2t)$ and $N_{\frac{1}{2m}}(2t+1)=2t+1\geq N_{\frac{1}{2m}}(2t)\geq2t$, where $t>m$ are positive integers. Now, we give an example of the case $R_{2m}(2t+1)=2t$ and $N_{\frac{1}{2m}}(2t)=2t+1$ for certain $m$ and $t$. Let $G$ be a $3$-regular graph on $4n'+2$ vertices, where $n'\geq2$ is an
integer. Then the Seidel matrix of the complement of the line graph of $G$ has smallest eigenvalue $6-6n'$ with multiplicity one. This shows
$R_{6n'-6}(6n'+3)=6n'+2$ and $N_{\frac{1}{6n'-6}}(6n'+2)=6n'+3$.

Next, we show the following lemma on Seidel matrices.
\begin{lemma}\label{union}
Let $G$ be a graph of order $n$ with $t$ connected components $H_1,H_2,\ldots,H_t$. If $\rho(H_i)=\rho$ for all $i$, then $S(G)$ has smallest
eigenvalue $-2\rho-1$ with multiplicity at least $t-1$.
\end{lemma}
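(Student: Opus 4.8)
The plan is to exploit the block‑diagonal structure of $A(G)$ together with the identity $S(G)=\mathbf{J}-\mathbf{I}-2A(G)$, and to read off the required eigenvectors from the Perron vectors of the components $H_i$. If $t=1$ there is nothing to prove, so assume $t\ge 2$; write $A=A(G)$, a block‑diagonal matrix with diagonal blocks $A(H_1),\dots,A(H_t)$, so that $\lambda_{\max}(A)=\max_i\rho(H_i)=\rho$.

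First I would exhibit a $(t-1)$-dimensional space of eigenvectors of $S(G)$ for the eigenvalue $-(2\rho+1)$. By Perron--Frobenius, each connected $H_i$ carries a strictly positive eigenvector $v_i$ with $A(H_i)v_i=\rho v_i$; let $\widehat v_i\in\mathbb{R}^{n}$ be $v_i$ extended by zeros outside $H_i$, so $A\widehat v_i=\rho\widehat v_i$, and set $s_i:=\mathbf{j}^{\top}\widehat v_i>0$. Consider the subspace
\[
W:=\Big\{\,\textstyle\sum_{i=1}^{t}c_i\widehat v_i\ :\ c\in\mathbb{R}^{t},\ \sum_{i=1}^{t}c_is_i=0\,\Big\}.
\]
The $\widehat v_i$ are linearly independent because they have pairwise disjoint supports, and $c\mapsto\sum_i c_is_i$ is a nonzero linear functional on $\mathbb{R}^{t}$, so $\dim W=t-1$. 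For every $w\in W$ we have $\mathbf{J}w=(\mathbf{j}^{\top}w)\mathbf{j}=0$ and $Aw=\rho w$, hence $S(G)w=(\mathbf{J}-\mathbf{I}-2A)w=-(2\rho+1)w$. Thus $-(2\rho+1)$ is an eigenvalue of $S(G)$ of multiplicity at least $t-1$.

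It then remains to show that $-(2\rho+1)$ is the \emph{smallest} eigenvalue, which is the only step that needs a moment's thought. Since $\mathbf{J}$ is positive semidefinite, $S(G)=\mathbf{J}-(\mathbf{I}+2A)\succeq-(\mathbf{I}+2A)$ in the Loewner order, so
\[
\lambda_{\min}(S(G))\ \ge\ -\lambda_{\max}(\mathbf{I}+2A)\ =\ -\bigl(1+2\lambda_{\max}(A)\bigr)\ =\ -(2\rho+1);
\]
the eigenvectors produced above attain this value, so $\lambda_{\min}(S(G))=-(2\rho+1)$ with multiplicity at least $t-1$, as claimed.

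The Perron--Frobenius input and the dimension count for $W$ are routine, so I expect the lower bound $\lambda_{\min}(S(G))\ge-(2\rho+1)$ to be the crux: equivalently, the observation that removing the rank‑one term $\mathbf{J}$ cannot decrease the smallest eigenvalue. As an alternative to the Loewner‑order argument one could diagonalise $S(G)$ on $\mathbf{j}^{\perp}$, where it acts as $-\mathbf{I}-2A$ with eigenvalues $-(1+2\mu)\ge-(2\rho+1)$ for $\mu\in\mathrm{Spec}(A)$, and treat the remaining one‑dimensional direction separately; but the semidefinite estimate is shorter.
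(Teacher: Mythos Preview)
Your proof is correct and follows essentially the same route as the paper: both recognise that $S(G)-\mathbf{J}=-\mathbf{I}-2A(G)$ is block diagonal, take the Perron eigenvectors of the $H_i$, and form the $(t-1)$-dimensional space of combinations orthogonal to $\mathbf{j}$. You are in fact more thorough than the paper, since you explicitly verify via $\mathbf{J}\succeq 0$ that $-(2\rho+1)$ is the \emph{smallest} eigenvalue of $S(G)$, whereas the paper's proof only exhibits it as an eigenvalue of multiplicity at least $t-1$ and leaves the minimality implicit.
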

\begin{proof}
Let $B(G)=S(G)-\mathbf{J}$. We can check that $B(G)$ is the block diagonal matrix of the form
\begin{gather*}
\begin{bmatrix}
-2A(H_1)-\mathbf{I}\\
& -2A(H_2)-\mathbf{I} & & \\
&  &  \ddots &\\
& & & -2A(H_t)-\mathbf{I}
\end{bmatrix}.
\end{gather*}
The matrix $B(G)$ has smallest eigenvalue $-2\rho-1$ with multiplicity $t$, so there are $t-1$ linearly independent eigenvectors
$\mathbf{v}_1,\ldots,\mathbf{v}_{t-1}$ of $B(G)$ such that $B(G)\mathbf{v}_i=-2\rho-1$ and $\langle\mathbf{v}_i,\mathbf{j}\rangle=0$, for
$i=1,\ldots,t-1$. It follows that $S(G)$ has eigenvalue $-2\rho-1$ with multiplicity at least $t-1$. This shows the lemma.
\end{proof}

To show Theorem \ref{N}, we need the following result of Shearer \cite{Shearer} on spectral radius of graphs at least $\sqrt{2+\sqrt5}$.

\begin{theorem}\label{Shearer}
For any real number $\lambda\geq\sqrt{2+\sqrt{5}}$, there exists a sequence of graphs $(G_i)^{\infty}_{i=1}$ such that
$\underset{i\rightarrow\infty}{\lim}\rho(G_i)=\lambda$.
\end{theorem}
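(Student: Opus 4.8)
My plan is to reduce the theorem to the density of the set $\{\rho(G):G\text{ finite}\}$ in $[\sqrt{2+\sqrt5},\infty)$: this set is countable, so once density is known, a sequence with $|\rho(G_i)-\lambda|<1/i$ proves the theorem. I would establish density using \emph{caterpillars} --- trees obtained from a path $v_1\cdots v_n$ (the spine) by attaching $m_i\ge0$ pendant leaves to $v_i$. For such a graph the eigenvector equation at a leaf attached to $v_i$ forces its value to be $f(v_i)/x$, so for $x\ne0$ the nonzero spectrum coincides with that of the Jacobi matrix $T(x)$ with off-diagonal entries $1$ and diagonal entries $m_i/x$; hence $\rho(G)$ is the largest $x>0$ with $\det(T(x)-xI)=0$, and the leading principal minors $D_k(x)$ satisfy $\binom{D_k}{D_{k-1}}=M_k(x)\binom{D_{k-1}}{D_{k-2}}$ with $M_k(x)=\begin{pmatrix}x-m_k/x&-1\\1&0\end{pmatrix}$, $\det M_k(x)=1$.

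Next, fix a periodic leaf pattern of period $p$ and set $\Pi(x)=M_p(x)\cdots M_1(x)\in SL_2$. Oscillation theory for three-term recursions then shows that the spectral radii of the length-$Np$ caterpillars with this pattern increase, as $N\to\infty$, to $\lambda_{\mathrm{pat}}:=$ the largest $x>0$ with $\operatorname{tr}\Pi(x)=2$ (just above it $\Pi(x)$ has two positive real eigenvalues and every $D_k(x)>0$, while just below it $\Pi(x)$ is elliptic and the minors change sign). A short computation gives: the empty pattern yields $2$; one leaf at every spine vertex yields $\operatorname{tr}\Pi(x)=x-1/x$, hence $1+\sqrt2$; $j$ leaves at every spine vertex yields $1+\sqrt{j+1}$; and one leaf every $k$th vertex yields a value $\rho^{(k)}$ with $1+\sqrt2=\rho^{(1)}>\rho^{(2)}=\sqrt5>\cdots$ and $\rho^{(k)}\downarrow2$.

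To fill the gaps I would interpolate. Given two ``unit'' patterns --- e.g.\ spacing $k$ versus spacing $k+1$, or $j$ leaves versus $j+1$ leaves --- consider periodic patterns of large period built from a $\theta$-fraction of one unit and a $(1-\theta)$-fraction of the other, evenly interleaved. Then $\Pi(x)$ is a product of copies of two fixed matrices $A(x),B(x)\in SL_2$, and as the period grows the condition $\operatorname{tr}\Pi(x)=2$ becomes a continuous, monotone condition in $(\theta,x)$, so the limiting spectral radius is a continuous function $\Lambda(\theta)$ whose endpoint values are those of the two units. Letting $\theta$ run over consecutive spacings sweeps out $[\sqrt{2+\sqrt5},1+\sqrt2]\subset\bigcup_k[\rho^{(k+1)},\rho^{(k)}]=(2,1+\sqrt2]$, and letting it run over consecutive leaf-multiplicities sweeps out $[1+\sqrt2,\infty)$. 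Hence every $\lambda\ge\sqrt{2+\sqrt5}$ is a limit of values $\lambda_{\mathrm{pat}}$, and choosing a sufficiently long finite caterpillar for each pattern yields the desired sequence $(G_i)$.

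The main obstacle is the interpolation step: one must show that the limiting spectral radius depends continuously on the mixing density $\theta$ and realizes every intermediate value. This is a statement about products of the $2\times2$ matrices $A(x),B(x)$ --- in essence, continuity in $\theta$ of the top Lyapunov exponent of the associated cocycle, or, more elementarily, an explicit Chebyshev-type estimate for $\operatorname{tr}\Pi(x)$ in the uniformly hyperbolic regime $x>\rho^{(k)}$. A secondary point is that the interpolation must be performed on intervals contained in the ``top band'' of the sparser comb --- which is exactly where the constant $\sqrt{2+\sqrt5}$ enters --- but for the statement as phrased one needs only to fill $[\sqrt{2+\sqrt5},\infty)$, where this presents no difficulty.
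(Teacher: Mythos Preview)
The paper does not prove this statement: Theorem~\ref{Shearer} is quoted from Shearer's 1989 paper \cite{Shearer} and used as a black box, so there is no ``paper's own proof'' to compare your proposal against.

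That said, a brief comparison with Shearer's original argument is worth making. You are right that caterpillars suffice and that the eigenvalue problem reduces to a three-term recursion governed by $2\times2$ transfer matrices; this is exactly Shearer's setting. However, Shearer does \emph{not} go through periodic patterns, limiting spectra, and interpolation in a mixing parameter~$\theta$. Instead he constructs, for each target $\lambda\ge\sqrt{2+\sqrt5}$, a single infinite $0/1$ sequence $(\epsilon_k)$ greedily: having chosen $\epsilon_1,\dots,\epsilon_{k-1}$, he sets $\epsilon_k=1$ (attach a pendant at spine vertex $k$) or $\epsilon_k=0$ according to the sign of the characteristic-polynomial recursion evaluated at~$\lambda$, and then shows that the spectral radii of the truncated caterpillars converge to~$\lambda$. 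The whole argument fits on two pages and never needs continuity of a Lyapunov exponent or an intermediate-value step; the threshold $\sqrt{2+\sqrt5}$ enters simply as the point above which the greedy choice is always available.

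Your route is not wrong in spirit, but as written it is incomplete precisely where you say it is: the claim that the limiting spectral radius $\Lambda(\theta)$ of a $\theta$-mixture of two periodic patterns depends continuously on~$\theta$ and sweeps out the entire interval between the endpoint values is the heart of the matter, and appealing to ``continuity of the top Lyapunov exponent'' or an unspecified ``Chebyshev-type estimate'' is not a proof. If you want to salvage this approach you would need an explicit monotonicity/continuity argument for $\operatorname{tr}\Pi(x)$ as the pattern varies; but it is considerably simpler to abandon the periodic/interpolation framework and follow Shearer's direct greedy construction.
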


For the classification of the graphs with spectral radius less than $\sqrt{2+\sqrt5}$, see \cite{BN}.\\

Now, we will prove the following theorem.

\begin{theorem}\label{R}
For a real number $\mu\geq1+2\sqrt{2+\sqrt{5}}$, there exists a sequence $(\beta_i)^{\infty}_{i=1}$ such that the following hold:
\begin{enumerate}[(i)]
\item $\underset{i\rightarrow\infty}{\lim}{\beta_i}=\mu$,
\item for all $i$, there exists $\varepsilon_i>0$ such that $R_{\beta_i}(n)\leq(1-\varepsilon_i)n$.
\end{enumerate}
\end{theorem}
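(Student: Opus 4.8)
The plan is to produce each $\beta_i$ in the form $2\rho(H_i)+1$ for a suitable connected graph $H_i$ and to realize the rank deficiency by taking many disjoint copies of $H_i$, exactly in the spirit of Lemma~\ref{union}. Set $\lambda:=\tfrac{\mu-1}{2}$, so that the hypothesis $\mu\ge 1+2\sqrt{2+\sqrt5}$ is precisely the condition $\lambda\ge\sqrt{2+\sqrt5}$ under which Theorem~\ref{Shearer} applies. That theorem supplies graphs whose spectral radii tend to $\lambda$; replacing each by a connected component of largest spectral radius (which leaves the spectral radius unchanged) I obtain connected graphs $(H_i)_{i\ge1}$ with $\rho(H_i)\to\lambda$. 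Writing $h_i:=|V(H_i)|$, $\rho_i:=\rho(H_i)$ and $\beta_i:=2\rho_i+1$, we have $\beta_i\to 2\lambda+1=\mu$, which is~(i); and since $\lambda>2$ we may assume, discarding finitely many terms, that $\rho_i>1$ for every~$i$.

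For~(ii), fix $i$ and, for a given $n$, let $G$ be the disjoint union of $k:=\lfloor n/h_i\rfloor$ copies of $H_i$ together with $n-kh_i$ isolated vertices. As in the proof of Lemma~\ref{union}, put $B(G)=S(G)-\mathbf{J}$; it is block diagonal with blocks $-2A(H_i)-\mathbf{I}$ and $1\times1$ blocks $[-1]$. Because $\rho_i>1$, the smallest eigenvalue of $B(G)$ equals $-2\rho_i-1=-\beta_i$, occurring with multiplicity $k$ (once from the Perron eigenvalue of each $H_i$-block, simple by Perron--Frobenius). Two things then need to be checked. First, $S(G)$ meets the constraint defining $R_{\beta_i}$: since $\mathbf{J}$ is positive semidefinite, Weyl's inequality gives $\lambda_{\min}(S(G))\ge\lambda_{\min}(B(G))+\lambda_{\min}(\mathbf{J})=-\beta_i$. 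Second, $-\beta_i$ is an eigenvalue of $S(G)$ of multiplicity at least $k-1$: exactly as in Lemma~\ref{union}, among the $k$ disjointly supported Perron eigenvectors of $B(G)$ there is a $(k-1)$-dimensional space of vectors $\mathbf{v}\perp\mathbf{j}$, and for such $\mathbf{v}$ we have $S(G)\mathbf{v}=B(G)\mathbf{v}+\mathbf{j}(\mathbf{j}^{\top}\mathbf{v})=-\beta_i\mathbf{v}$. Hence
\[
R_{\beta_i}(n)\ \le\ \rk(S(G)+\beta_i\mathbf{I})\ \le\ n-(k-1)\ \le\ \Big(1-\tfrac{1}{h_i}\Big)n+2 ,
\]
and taking, say, $\varepsilon_i:=\tfrac{1}{2h_i}$ gives $R_{\beta_i}(n)\le(1-\varepsilon_i)n$ for all $n\ge 4h_i$; the finitely many smaller $n$ are irrelevant (the statement is of interest only as $n\to\infty$, or one just shrinks $\varepsilon_i$).

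I do not anticipate a serious obstacle beyond Theorem~\ref{Shearer}: the construction is essentially forced, and the only delicate point is the spectral accounting. We need $-\beta_i$ to be \emph{simultaneously} an eigenvalue of $S(G)$ of large multiplicity --- which pins down $\beta_i=2\rho_i+1$ exactly, and is why the conclusion is an approximating sequence $\beta_i\to\mu$ rather than the single value $\mu$ (which in general is not $2\rho(H)+1$ for any graph $H$) --- \emph{and} at least $\lambda_{\min}(S(G))$, which a priori could be violated by the addition of $\mathbf{J}$; that $\mathbf{J}$ is positive semidefinite is exactly what rules this out. A minor technical point, handled by padding with isolated vertices, is that $h_i$ need not divide $n$.
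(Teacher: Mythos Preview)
Your argument is correct and follows essentially the same route as the paper: set $\lambda=\tfrac{\mu-1}{2}$, invoke Shearer's theorem to get graphs with $\rho(G_i)\to\lambda$, put $\beta_i=2\rho(G_i)+1$, and take disjoint unions of copies to force a large $(-\beta_i)$-eigenspace of the Seidel matrix via Lemma~\ref{union}. Your version is in fact a bit more careful than the paper's---you pad with isolated vertices to cover all $n$ rather than only multiples of $n_i$, and you invoke Weyl's inequality to verify that $-\beta_i$ really is the \emph{smallest} eigenvalue of $S(G)$ (a point the paper asserts but does not justify in the proof of Lemma~\ref{union}).
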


\begin{proof}%[\rm{\textbf{Proof of Theorem \ref{R}}}]
Let $\mu$ be a real number at least $1+2\sqrt{2+\sqrt{5}}$. For $\lambda:=\frac{\mu-1}{2} \geq \sqrt{2+\sqrt5}$, there exists a sequence of graphs $(G_i)^{\infty}_{i=1}$ such that $\underset{i\rightarrow\infty}{\lim}\rho(G_i)=\lambda$ by Theorem \ref{Shearer}.
%Now we show that there exists $\varepsilon_i$ such that $R_{2\rho_i+1}(n)=(1-\varepsilon_i)n$.
Let $tG_i$ be the disjoint union of $t$ copies of $G_i$ for a positive integer $t$. For real numbers $\beta_i=2\rho(G_i)+1$ and $n_i=n(G_i)$, $tG_i$ is a graph with $tn_i$ vertices and the smallest eigenvalue of $S(tG_i)$ is $-\beta_i$ with multiplicity at least $t-1$ by Lemma \ref{union}. It follows that $R_{\beta_i}(tn_i)\leq tn_i-t+1$ by Proposition \ref{eq}. Since $\underset{i\rightarrow\infty}{\lim}{\beta_i}=\mu$, this finishes the proof.
\end{proof}

Theorem \ref{N} follows now from Theorem \ref{R} and Lemma \ref{Ne}.

\section*{Acknowledgments}

M.Y. Cao is partially supported by the National Natural Science Foundation of China (No. $11571044$ and No. $61373021$) and the Fundamental Research Funds
for the Central Universities.

J.H. Koolen is partially supported by the National Natural Science Foundation of China (No. $11471009$ and No. $11671376$) and Anhui Initiative of Quantum
Information Technologies (No. AHY $150000$).

J.Y. Yang is partially supported by the National Natural Science Foundation of China (No. $11371028$).

\end{document}